\newtheorem{prop}{Proposition}[section]
\newtheorem{thm}[prop]{Theorem}
\newtheorem{lemma}[prop]{Lemma}
\theoremstyle{remark}
\newtheorem{rmk}[prop]{Remark}
\title{Extremal Transitions from Nested Reflexive Polytopes}
\author{Karl Fredrickson}
\date{}
\begin{document}
\maketitle
\begin{abstract}In this paper we look at the question of when an inclusion of reflexive polytopes determines a torically-defined extremal transition between smooth Calabi-Yau hypersurface families.  We show this is always possible in dimensions two and three.  However, in dimension four and higher, obstructions can occur.  This leads to a smooth projective family of Calabi-Yau threefolds that is birational to one of Batyrev's hypersurface families, but topologically distinct from all such families.\end{abstract}
\section{Introduction}
A well-known idea in the study of Calabi-Yau varieties is the use of nested reflexive polytopes to connect the moduli spaces of two Calabi-Yau families.  By Batyrev's construction in \cite{Ba}, any reflexive polytope $\Delta_1$ determines a family of Calabi-Yau hypersurfaces in a resolution of the associated toric variety $X(\Delta_1)$, obtained from the fan of cones over proper faces of $\Delta_1$.  These hypersurfaces are defined by generic sections of the anticanonical bundle of the ambient toric variety, which has a basis of monomial global sections in bijection with lattice points in the dual polytope $\Delta^*_1$. Suppose that $\Delta_1$ is contained in a larger reflexive polytope $\Delta_2$.  Then we have a reverse inclusion $\Delta^*_2 \subseteq \Delta^*_1$.  By allowing coefficients on the monomials associated to lattice points in $\Delta^*_1 \backslash \Delta^*_2$ to approach zero, we obtain a degeneration of smooth Calabi-Yau hypersurfaces in a resolution of $X(\Delta_1)$.  The singular fibers will be birational to Calabi-Yau varieties in $X(\Delta_2)$.  

This strategy was used in \cite{web} and \cite{acjm} to show that all families of smooth Calabi-Yau threefolds in (resolutions of) four-dimensional weighted projective spaces can be connected by extremal transitions.  An extremal or geometric transition is a transition from one nonsingular Calabi-Yau variety $X$ to another Calabi-Yau variety $Y$, in which $X$ degenerates to a singular variety $X_0$, and $Y$ is obtained as a resolution of singularities of $X_0$.  This includes the more familiar case of conifold transitions, where $X_0$ has isolated ordinary double points as singularities and $Y$ is a small resolution of $X_0$.  For more on the subject of extremal transitions, see the papers \cite{morrison} and \cite{rossi}.

After Batyrev introduced his reflexive polytope construction, Kreuzer and Skarke classified all possible reflexive polytopes in four dimensions up to lattice equivalence (473,800,776, as it turns out!) in \cite{ks}.  As a byproduct of their classification, they were able to show that any pair of these reflexive polytopes can be connected by a chain of inclusions, thus showing that the moduli spaces are also connected in this class of Calabi-Yau threefolds.

The main purpose of this paper is to examine when an inclusion of reflexive polytopes $\Delta_1 \subseteq \Delta_2$ leads to a torically defined geometric transition between associated Calabi-Yau families $\mathcal{X}(\Delta_1)$ and $\mathcal{X}(\Delta_2)$.  The toric varieties $X(\Delta_i)$ are defined by fans $\Sigma(\Delta_i)$ which are given by taking cones over the proper faces of $\Delta_i$, and the families $\mathcal{X}(\Delta_i)$ exist in MPCP (maximal projective crepant partial, as defined in \cite{Ba}) resolutions $\widehat{X}(\Delta_i)$ of $X(\Delta_i)$ associated to fans $\widehat{\Sigma}(\Delta_i)$ which are subdivisions of the original fans $\Sigma(\Delta_i)$.  

By letting coefficients on the appropriate monomials go to zero, it is always possible to degenerate the family $\mathcal{X}(\Delta_1)$ to a singular family $\mathcal{X}_0(\Delta_1)$ which is birational to $\mathcal{X}(\Delta_2)$.  However, the question of whether the singular family can be torically resolved to $\mathcal{X}(\Delta_2)$ depends on whether there exists a toric morphism $\widehat{X}(\Delta_2) \rightarrow \widehat{X}(\Delta_1)$.  This is the same as asking whether MPCP subdivisions of $\Sigma(\Delta_i)$ exist such that $\widehat{\Sigma}(\Delta_2)$ is a subdivision of $\widehat{\Sigma}(\Delta_1)$.  This is a nontrivial convex geometry problem that does not always have a solution. 

In the two-dimensional case, only one MPCP subdivision of a given reflexive polytope is possible, and it is relatively easy to show that the MPCP resolutions of nested two-dimensional reflexive polytopes $\Delta_1 \subseteq \Delta_2$ will always be compatible with each other.  In three dimensions, we show that any MPCP subdivision $\widehat{\Sigma}(\Delta_1)$ can be refined to an MPCP subdivision $\widehat{\Sigma}(\Delta_2)$.  In four dimensions there are cases where $X(\Delta_1)$ and $X(\Delta_2)$ are already smooth (so smooth hypersurface families exist in both $X(\Delta_i)$ and no resolution is needed) but the toric morphism $X(\Delta_2) \rightarrow X(\Delta_1)$ fails to exist.  We will see that this counterexample leads to a smooth projective Calabi-Yau family which is birational to one of Batyrev's hypersurface families, but topologically distinct from all such families.

\subsection{Basic definitions and notation}
The notion of a reflexive polytope was originally introduced by Batyrev in his paper \cite{Ba}.  Suppose we have a lattice $N \cong \mathbb{Z}^n$ for some $n$.  We can consider the corresponding real vector space $N_\mathbb{R} = N \otimes \mathbb{R}$, the dual lattice $M = Hom(N, \mathbb{Z})$, and the dual vector space $M_\mathbb{R} = M \otimes \mathbb{R}$.  A reflexive polytope $\Delta \subseteq N_\mathbb{R}$ is a lattice polytope with the origin in its interior, such that the dual polytope $\Delta^* \subseteq M_\mathbb{R}$ is also a lattice polytope.  The dual polytope is defined as 
$$\Delta^* = \{ m \in M_\mathbb{R} \ | \ \langle m, n \rangle \geq -1, \forall n \in \Delta \},$$ 
where $\langle , \rangle$ is the natural real-valued pairing between $M_\mathbb{R}$ and $N_\mathbb{R}$.  Any reflexive polytope will have the origin as its only interior lattice point, but in dimensions three and higher there are examples of lattice polytopes with this property that are not reflexive.

By ``cone over'' a subset $S$ of a real vector space, we will always mean the set \[ \mathbb{R}_{\geq 0} S = \{ r s \ | \ r \in \mathbb{R}, r \geq 0, s \in S \}. \]

Given a reflexive polytope $\Delta$ (or any lattice polytope with the origin in its interior), we can consider the toric variety $X(\Delta)$ associated to $\Delta$.  This is defined as the toric variety associated to the fan $\Sigma(\Delta)$ which has cones consisting of cones over proper faces of $\Delta$.  As long as $\Delta$ is reflexive, $X(\Delta)$ will have at most Gorenstein singularities, and the Newton polytope of global sections of its anticanonical bundle is just the dual polytope $\Delta^*$.  

If $\Sigma$ is a complete fan, then any integral piecewise linear function $\varphi : \Sigma \rightarrow \mathbb{R}$ determines a line bundle $\mathcal{L}$ on $X(\Sigma)$.  As a vector space over $\mathbb{C}$, global sections of $\mathcal{L}$ have a basis consisting of lattice points in the Newton polytope of $\varphi$, defined as 
$$Newt(\varphi) = \{ m \in M_\mathbb{R} \ | \ \langle m, n \rangle \geq -\varphi(n), \forall n \in N_\mathbb{R} \}.$$
Note that the convention for $\varphi$ we use is the opposite of some sources such as \cite{CLS}.  They would use $-\varphi$ to obtain the same bundle $\mathcal{L}$.  Thus, for instance, we associate ample line bundles with strictly lower convex functions $\varphi$ on $\Sigma$, where lower convex means that for any $a, b \in [0,1]$ with $a+b = 1$ and $n_1, n_2 \in N_\mathbb{R}$, 
$$\varphi(a n_1 + b n_2) \leq a \varphi (n_1) + b \varphi(n_2).$$

Let $\Delta \subseteq N_\mathbb{R}$ be a reflexive polytope.  A subdivision $\widehat{\Sigma}(\Delta)$ of the fan $\Sigma(\Delta)$ is called an MPCP subdivision if it is projective, the maximal cones of $\widehat{\Sigma}(\Delta)$ are cones over elementary simplices (simplices containing no lattice points except their vertices), and the primitive integral generators of all rays in $\widehat{\Sigma}(\Delta)$ are lattice points in the boundary of $\Delta$.  The main purpose of MPCP subdivisions in \cite{Ba} is to resolve families of Calabi-Yau hypersurfaces.  If $\Delta$ is dimension four or less, then an MPCP resolution of $X(\Delta)$ will resolve all generic members of the family $\mathcal{X}(\Delta) \subseteq X(\Delta)$ to smooth Calabi-Yau hypersurfaces.  

MPCP subdivisions of reflexive polytopes always exist.  However, they are in general not unique, unless $\Delta$ is two-dimensional.

\subsection{Two dimensional case}

If $\Delta \subseteq \mathbb{R}^2$ is a two-dimensional reflexive polytope, then there is exactly one MPCP subdivision $\widehat{\Sigma}(\Delta)$ of the fan $\Sigma(\Delta)$.  This is the fan whose maximal cones are cones over line segments $L = Conv(v_1, v_2)$, where $v_1$ and $v_2$ are adjacent lattice points in an edge of $\Delta$.

\begin{thm} \label{twodim} Suppose $\Delta_1 \subseteq \Delta_2$ are two-dimensional reflexive polytopes.  Then if $\widehat{\Sigma}(\Delta_i)$ is the unique MPCP subdivision of $\Sigma(\Delta_i)$, we have that $\widehat{\Sigma}(\Delta_2)$ is a refinement of $\widehat{\Sigma}(\Delta_1)$. \end{thm}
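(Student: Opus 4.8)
The plan is to reduce the statement to a containment of ray sets and then invoke the fact that in two dimensions a complete fan is determined by its rays. Recall from the description of $\widehat{\Sigma}(\Delta_i)$ that its rays are exactly the rays $\mathbb{R}_{\geq 0} v$ through the lattice points $v$ lying on the edges of $\Delta_i$, i.e.\ the boundary lattice points of $\Delta_i$, and its maximal cones are spanned by the rays through cyclically adjacent such points. Since $\Delta_i$ contains the origin in its interior, $\Sigma(\Delta_i)$, and hence its refinement $\widehat{\Sigma}(\Delta_i)$, is a complete fan; and any complete fan in $\mathbb{R}^2$ is recovered from its set of rays by taking the two-dimensional cones over consecutive (in cyclic order) pairs of rays. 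Consequently, if I can show that every ray of $\widehat{\Sigma}(\Delta_1)$ is also a ray of $\widehat{\Sigma}(\Delta_2)$, then each maximal cone of $\widehat{\Sigma}(\Delta_1)$ is subdivided by the additional rays of $\widehat{\Sigma}(\Delta_2)$ lying inside it, and since the two fans have the same support $\mathbb{R}^2$ this exhibits $\widehat{\Sigma}(\Delta_2)$ as a refinement of $\widehat{\Sigma}(\Delta_1)$.

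The crux is therefore to prove that every lattice point $v$ on the boundary of $\Delta_1$ also lies on the boundary of $\Delta_2$ and generates a ray there. First I would note that any boundary lattice point $v$ of a two-dimensional reflexive polytope is primitive: if $v = k w$ with $w \in N$ and $k \geq 2$, then $w$ would be a lattice point strictly between the origin and $v$, hence an interior lattice point of $\Delta_1$ different from the origin, contradicting the fact (noted above) that the origin is the only interior lattice point of a reflexive polytope. Thus the rays of $\widehat{\Sigma}(\Delta_i)$ are in bijection with the boundary lattice points of $\Delta_i$, each being its own primitive generator. Now given $v \in \partial\Delta_1 \cap N$, the inclusion $\Delta_1 \subseteq \Delta_2$ gives $v \in \Delta_2$, and $v \neq 0$; applying the same ``only the origin is interior'' property to $\Delta_2$ forces $v \in \partial\Delta_2$. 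Hence $v$ is a boundary lattice point of $\Delta_2$, and $\mathbb{R}_{\geq 0} v$ is a ray of $\widehat{\Sigma}(\Delta_2)$.

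Combining the two steps finishes the argument: the ray set of $\widehat{\Sigma}(\Delta_1)$ is contained in that of $\widehat{\Sigma}(\Delta_2)$, both fans are complete, and in $\mathbb{R}^2$ this suffices for refinement. The only substantive point is the boundary-to-boundary claim, and the main thing to be careful about is that it genuinely relies on reflexivity (equivalently, on the absence of nonzero interior lattice points) rather than on mere convexity -- a boundary point of an arbitrary nested pair of polytopes can of course fall into the interior of the larger one. I would also emphasize that the argument exploits the two-dimensional phenomenon that a complete fan is rigidly determined by its rays, so that no compatibility of subdivisions beyond ray containment needs to be checked; this is precisely the feature that fails in higher dimensions, where MPCP subdivisions are non-unique and the analogous compatibility becomes the central difficulty treated later in the paper.
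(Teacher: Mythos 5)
Your proposal is correct and follows essentially the same route as the paper's proof: both reduce the statement to the fact that a complete two-dimensional fan refines another if and only if it contains all of its rays, and then deduce the ray containment from $\Delta_1 \subseteq \Delta_2$. Your write-up merely makes explicit two points the paper treats as immediate — that nonzero lattice points of a reflexive polytope are automatically boundary points (and primitive), and why ray containment implies refinement for complete fans in $\mathbb{R}^2$.
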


\begin{proof} If $F_1$ and $F_2$ are any complete two-dimensional fans, then $F_2$ is a refinement of $F_1$ if and only if every ray of $F_1$ is also a ray of $F_2$.  Since the rays of $\widehat{\Sigma}_i$ in this case are the rays over nonzero lattice points in $\Delta_i$, the statement follows immediately from the fact that $\Delta_1 \subseteq \Delta_2$. \end{proof}

For an illustration of a possible case, see Figure \ref{nestedsubdivs}.

\begin{figure}[t] 
  \begin{center} \includegraphics[scale=0.75]{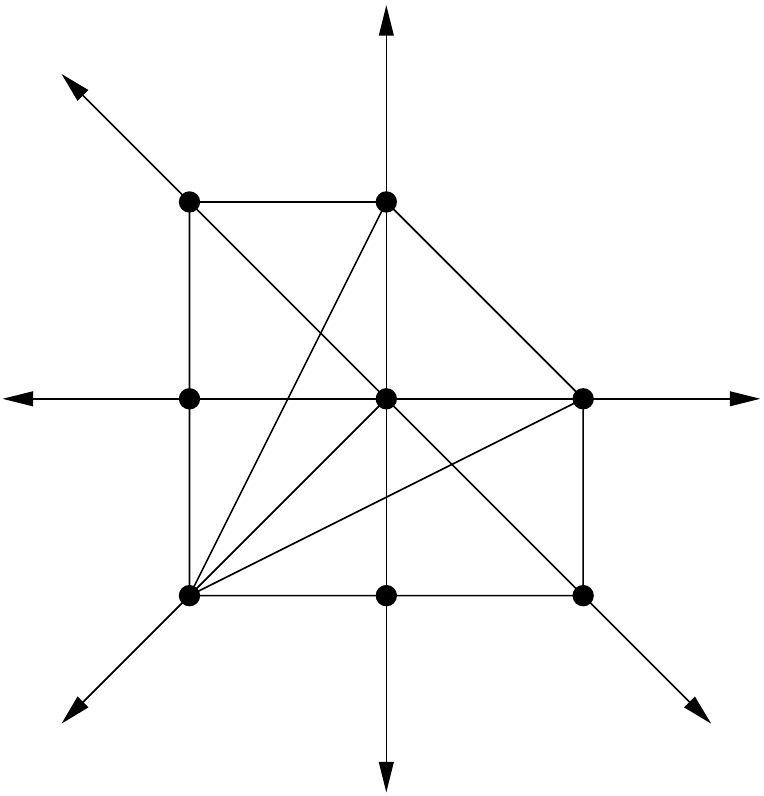} \end{center}
\caption{\label{nestedsubdivs} An inclusion of reflexive polytopes $\Delta_1 \subseteq \Delta_2$ and the corresponding MPCP subdivisions.}
\end{figure}

\section{Three dimensional case}

In three dimensions, reflexive polytopes can have many possible MPCP subdivisons.  We will prove that, given an inclusion $\Delta_1 \subseteq \Delta_2$ of three dimensional reflexive polytopes and any choice of MPCP subdivision $\widehat{\Sigma}(\Delta_1)$ of $\Sigma(\Delta_1)$, it is possible find an MPCP subdivision $\widehat{\Sigma}(\Delta_2)$ which is a refinement of $\widehat{\Sigma}(\Delta_1)$.

\begin{prop} \label{threedim} Let $\Delta$ be a three-dimensional reflexive polytope, let $C_2 \in \Sigma(\Delta)$ be the cone over an edge $E$ of $\Delta$, and let $C_1$ be a two-dimensional cone of the form $$C_1 = \mathbb{R}_{\geq 0} v_1 + \mathbb{R}_{\geq 0} v_2$$ where $v_i$ are lattice points in $\Delta$.  Then if $C_1 \cap C_2$ is one-dimensional, it must be the ray over a lattice point in $\Delta$. \end{prop}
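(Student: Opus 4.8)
The plan is to reduce the statement to proving that a single primitive lattice vector lies in $\Delta$, and then to rule out the bad case by using \emph{both} facets of $\Delta$ that meet along the edge $E$. First I would record the rationality. Since $v_1,v_2$ and the endpoints of $E$ are all lattice points, the plane $P_1=\mathbb{R}v_1+\mathbb{R}v_2$ and the plane $P_2$ spanned by $E$ are both rational. If $C_1\cap C_2$ is one-dimensional then $P_1\neq P_2$, so $\ell:=P_1\cap P_2$ is a rational line and $C_1\cap C_2=\mathbb{R}_{\geq 0}w_0$ for a primitive lattice vector $w_0$. It then suffices to show $w_0\in\Delta$. Let $F$ and $F'$ be the two facets of $\Delta$ containing $E$, with defining functionals $m_F,m_{F'}\in\Delta^*\cap M$ (so $\langle m_F,\cdot\rangle,\langle m_{F'},\cdot\rangle\geq -1$ on $\Delta$, with equality along $F$, $F'$ respectively). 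On $C_2$ both functionals equal minus the ``height'' of a point, and $C_2\cap\{\text{height}=1\}=E$. Because $w_0$ is a lattice point of $C_2\setminus\{0\}$ and $m_F$ is integral, its height $r:=-\langle m_F,w_0\rangle$ is a positive integer, and showing $w_0\in\Delta$ is equivalent to showing $r=1$, i.e. $w_0\in E$.

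Next I would dispose of the degenerate configurations. If $\rho=C_1\cap C_2$ is one of the boundary rays $\mathbb{R}_{\geq 0}v_i$ of $C_1$, or one of the boundary rays of $C_2$, then $w_0$ is a positive rational multiple of a lattice point of $\Delta$ lying on the segment from the origin to that lattice point, so $w_0\in\Delta$ by convexity. Hence I may assume $\rho$ lies in the relative interior of $C_1$, giving $w_0=\alpha v_1+\beta v_2$ with $\alpha,\beta>0$, and that the coplanar case $C_1\subseteq P_2$ does not occur.

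The heart of the argument is the map $\Phi(n)=(\langle m_F,n\rangle,\langle m_{F'},n\rangle)$. It sends $\Delta$ into $\{u\geq -1,\ v\geq -1\}$, sends each $v_i$ to an integral point $P_i=(a_i,b_i)$ with $a_i,b_i\geq -1$, and sends $w_0$ to $(-r,-r)$, so that $\alpha P_1+\beta P_2=(-r,-r)$. Subtracting coordinates gives $\alpha(a_1-b_1)+\beta(a_2-b_2)=0$ with $\alpha,\beta>0$; since $P_2=\ker(m_F-m_{F'})$ and we are past the coplanar case, the two differences $a_i-b_i$ are nonzero (if one vanished the relation would force the other to vanish, returning to the excluded case), hence they have opposite signs. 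A short computation using only $a_i,b_i\geq -1$ then forces $a_2=b_1=-1$ and subsequently $a_1b_2=0$. In each resulting case the coefficients $\alpha,\beta$ come out as integer multiples of $r$, so that $w_0=\alpha v_1+\beta v_2$ is $r$ times a lattice vector. Primitivity of $w_0$ then forces $r=1$, i.e. $w_0\in E\subseteq\Delta$, which is the claim.

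I expect the main obstacle to be precisely this last step, the case where $v_1$ and $v_2$ lie strictly off $P_2$ on opposite sides. The naive estimate from a single facet only yields $\alpha+\beta\geq r$ and is too weak to bound the height. The essential point is that feeding the relation through both facets $F$ and $F'$ at once pins the ``far'' coordinates of $P_1$ and $P_2$ to $-1$ and forces the cone coefficients to be divisible by $r$; it is then the \emph{primitivity} of the generator $w_0$, rather than any convexity inequality, that prevents the ray from over-shooting the edge $E$.
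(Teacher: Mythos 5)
Your proof is correct and follows essentially the same route as the paper: both arguments use the two facets of $\Delta$ adjacent to $E$ and their dual vertices $m_1, m_2$ to force the $2\times 2$ pairing matrix $(\langle m_i, v_j\rangle)$ into the same normal form (anti-diagonal entries $-1$, one of the remaining entries zero, the other nonnegative), and both conclude by producing the explicit lattice point $v_1+(b+1)v_2$ on the ray $C_1\cap C_2$, which lies in $E$. The only difference is packaging: the paper checks directly that this point generates $C_1\cap C_2$ and lies on the edge, whereas you solve the linear system for the cone coefficients and use primitivity of $w_0$ to force the height $r=1$ — the same computation in a slightly different order.
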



\begin{proof} 

The edge $E$ of $\Delta$ can be written uniquely as the intersection of two two-dimensional faces of $\Delta$, $f_1$ and $f_2$.  Let $m_1$ and $m_2$ be the vertices of $\Delta^*$ dual to $f_1$ and $f_2$, so that $\langle m_1, f_1 \rangle = -1$ and $\langle m_2, f_2 \rangle = -1$.  We can assume that $C_1 \cap C_2$ intersects the relative interior of both cones $C_i$, since otherwise the intersection would have to be a boundary ray of one of the cones, which by definition is a ray over a lattice point in $\Delta$.

{\bf Claim.}  Let $A$ be the $2 \times 2$ integer matrix defined by $a_{ij} = \langle m_i, v_j \rangle$.  Then if $C_1 \cap C_2$ is one-dimensional and intersects the relative interior of both $C_1$ and $C_2$, $A$ must have the form
\[\left(
\begin{array}{cc}
-1 & a \\
b & -1 
\end{array}\right) \]
or 
\[\left(
\begin{array}{cc}
a & -1 \\
-1 & b 
\end{array} \right)
\]
with one of $a$ or $b$ equal to zero and the other greater than or equal to zero.
\begin{proof} First we establish the existence and position of the negative ones in the matrix.  Suppose there was a row of $A$ that did not contain a $-1$.  Then for either $i=1$ or $2$, $\langle m_i, v_1 \rangle \geq 0$ and $\langle m_i, v_2 \rangle \geq 0$.  This implies that $\langle m_i, c \rangle \geq 0$ for every $c \in C_1$.  Since $m_1$ and $m_2$ must both evaluate negatively on every nonzero element of $C_2$, this means $C_1 \cap C_2$ must be the zero vector and is not one-dimensional.

Now suppose there was a column that did not contain a $-1$.  Then one of the $v_j$ is such that $\langle m_1, v_j \rangle$ and $\langle m_2, v_j \rangle$ are both at least zero.  For $C_1$ to intersect $C_2$ nontrivially, the other primitive integral generator $v_i$ must then be such that both $\langle m_1, v_i \rangle = \langle m_2, v_i \rangle = -1$.  But this implies that $v_i \in C_2$, so the intersection of the two cones must be the ray over $v_i$, which does not intersect the relative interior of both.  

To prove the statement about $a$ and $b$, suppose that at least one of $a$ or $b$ was equal to $-1$.  Then we would have a column with two negative ones, or a $v_i$ with $\langle m_1, v_i \rangle = \langle m_2, v_i \rangle = -1$, which as in the previous paragraph implies that $C_1 \cap C_2$ is the ray over $v_i$.  So we must show that not both $a$ and $b$ are positive.  Let $u$ be a nonzero vector contained in the ray $C_1 \cap C_2$.  We have $u = c_1v_1+c_2v_2$ with $c_1, c_2 > 0$ and unique.  

Suppose that $c_2 \geq c_1$ and the matrix has the first form, so that $a$ is the $(1,2)$ entry.  Then $\langle m_1, u \rangle = c_1 \langle m_1, v_1 \rangle + c_2 \langle m_1, v_2 \rangle = -c_1+ac_2$, but if $a$ is positive then $a \geq 1$ and $-c_1+ac_2 \geq 0$, contradicting that $u \in C_2$.  Therefore $c_2 \geq c_1$ implies $a=0$.  If $c_1 \geq c_2$, then a similar argument with $m_2$ in place of $m_1$ works to show that $b$ must be zero.  Thus in all cases, at least one of $a$ or $b$ must equal zero.

If the matrix has the second form, so that $a$ is the $(1,1)$ entry, it can be transformed into the first form by relabeling $v_1 \rightarrow v_2$ and $v_2 \rightarrow v_1$, and the above argument can still be used.
\end{proof}

Now we will assume (again, possibly by relabeling $v_1$ and $v_2$) that 
\[ A= \left(
\begin{array}{cc}
-1 & a \\
b & -1 
\end{array} \right).
 \]
Suppose that $a=0$.  Then the lattice point $q = v_1+(b+1)v_2$ will have $\langle m_1, q \rangle = -1$, and $\langle m_2, q \rangle = -1$.  This means that $q$ is the primitive integral generator of $C_1 \cap C_2$, and also that $q \in f_1$, because $f_1$ consists of the points in $\Delta$ on which $m_1$ and $m_2$ evaluate to $-1$.  Thus the primitive integral generator of $C_1 \cap C_2$ lies in $\Delta_2$ and we are done.  The other possible case is $b=0$.  Then we would use the lattice vector $q = (a+1)v_1+v_2$ and repeat the same argument.
\end{proof}

\begin{rmk} The reflexive assumption is essential for Proposition \ref{threedim} to be true, even if $\Delta$ is a lattice polytope with only the origin in its interior.  For a counterexample, consider the polytope $\Delta$ with nine total vertices $(\pm 1, \pm 1, -1)$, $(1,1,1)$, $(-1,1,0)$, $(-1,0,1)$, $(1,-1,1)$, and $(-1,1,1)$.  This polytope can be thought of as a cube with one corner cut off, and is non-reflexive because the face \[ Conv((1,1,1),(-1,1,0),(-1,0,1)) \] produces a non-Gorenstein singularity.  If we let $C_2$ be the cone over the edge $Conv((1,1,1),(-1,1,0))$, and $C_1$ be the cone over the line segment $Conv((0,1,0),(0,0,1))$, then $C_1$ and $C_2$ meet the conditions of Proposition \ref{threedim}, but their intersection is the ray over the lattice point $(0,2,1)$, which is not contained in $\Delta$. \end{rmk}
 
Now we can prove:

\begin{thm} \label{transexistence} Suppose that $\Delta_1 \subseteq \Delta_2$ are three-dimensional reflexive polytopes, and let $\widehat{\Sigma}(\Delta_1)$ be any MPCP subdivision of $\Sigma(\Delta_1)$.  Then there exists an MPCP subdivision $\widehat{\Sigma}(\Delta_2)$ of $\Sigma(\Delta_2)$ such that $\widehat{\Sigma}(\Delta_2)$ is a refinement of $\widehat{\Sigma}(\Delta_1)$.  
\end{thm}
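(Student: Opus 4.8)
The plan is to construct $\widehat{\Sigma}(\Delta_2)$ in two stages: first build the coarsest common refinement of $\widehat{\Sigma}(\Delta_1)$ and the unsubdivided fan $\Sigma(\Delta_2)$, and then refine that further to a genuine MPCP subdivision. The starting observation, which I would use repeatedly, is a direct consequence of reflexivity: if $w$ is any nonzero lattice point of $\Delta_2$, then since the only interior lattice point of a reflexive polytope is the origin, $w$ must lie in $\partial\Delta_2$. In particular every primitive ray generator of $\widehat{\Sigma}(\Delta_1)$ is a lattice point of $\partial\Delta_1 \subseteq \Delta_2$, hence a lattice point of $\partial\Delta_2$; this is what makes it plausible that the rays of $\widehat{\Sigma}(\Delta_1)$ can be used in an MPCP subdivision of $\Delta_2$ at all.

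First I would form the common refinement $\Sigma_0 = \widehat{\Sigma}(\Delta_1) \wedge \Sigma(\Delta_2)$, whose maximal cones are the intersections of maximal cones of the two fans. By construction $\Sigma_0$ refines both $\widehat{\Sigma}(\Delta_1)$ and $\Sigma(\Delta_2)$, and it is projective: if $\varphi_1$ is a strictly lower convex support function for the projective fan $\widehat{\Sigma}(\Delta_1)$ and $\varphi_2$ one for $\Sigma(\Delta_2)$ (which is projective because $X(\Delta_2)$ is Gorenstein Fano), then $\varphi_1 + \varphi_2$ is strictly lower convex on $\Sigma_0$. The crucial point is to control the rays of $\Sigma_0$. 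Every ray of $\Sigma_0$ is either a ray of $\widehat{\Sigma}(\Delta_1)$, a ray of $\Sigma(\Delta_2)$, or arises as a one-dimensional intersection $C_1 \cap C_2$ of a two-dimensional cone $C_1 = \mathbb{R}_{\geq 0} v_1 + \mathbb{R}_{\geq 0} v_2$ of $\widehat{\Sigma}(\Delta_1)$ with a two-dimensional cone $C_2$ of $\Sigma(\Delta_2)$, the latter being the cone over an edge of $\Delta_2$. The first two types are over lattice points of $\partial\Delta_1$ and vertices of $\Delta_2$ respectively. For the third type, the $v_i$ are lattice points of $\Delta_1 \subseteq \Delta_2$, so Proposition \ref{threedim} applies directly and shows that $C_1 \cap C_2$ is the ray over a lattice point of $\Delta_2$; being nonzero, this lattice point lies in $\partial\Delta_2$ by the reflexivity observation above. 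Hence every ray of $\Sigma_0$ is over a boundary lattice point of $\Delta_2$.

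It then remains to refine $\Sigma_0$ into an MPCP subdivision. I would take a regular (coherent) triangulation of $\partial\Delta_2$ that refines the subdivision induced by $\Sigma_0$ and uses every lattice point of $\partial\Delta_2$ as a vertex; such triangulations exist by the standard theory of regular subdivisions (for instance by perturbing the convex function defining $\Sigma_0$, or via a placing construction), and coning over it produces a fan $\widehat{\Sigma}(\Delta_2)$. Because the triangulation is regular, $\widehat{\Sigma}(\Delta_2)$ is projective; because it uses all lattice points of $\partial\Delta_2$, every maximal simplex is elementary and every ray generator is a boundary lattice point, so $\widehat{\Sigma}(\Delta_2)$ is MPCP. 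Finally, since it refines $\Sigma_0$, which in turn refines $\widehat{\Sigma}(\Delta_1)$, it is a refinement of $\widehat{\Sigma}(\Delta_1)$, and since $\Sigma_0$ subdivides $\Sigma(\Delta_2)$ it is a subdivision of $\Sigma(\Delta_2)$, as required. I expect the main obstacle to be precisely the control of the rays of the common refinement: a priori an intersection $C_1 \cap C_2$ could be a ray through a non-lattice point or through an interior point, either of which would destroy the MPCP property, and it is exactly Proposition \ref{threedim} (which fails without reflexivity, as the remark shows) that rules this out.
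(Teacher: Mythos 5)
Your proposal is correct and follows essentially the same route as the paper: form the intersection (common refinement) fan of $\widehat{\Sigma}(\Delta_1)$ and $\Sigma(\Delta_2)$, use Proposition \ref{threedim} with $\Delta = \Delta_2$ to show every new ray is generated by a lattice point of $\Delta_2$, and then refine to an MPCP subdivision via regular triangulations in the style of \cite{GKZ}. Your write-up in fact makes explicit two points the paper leaves implicit (projectivity of the common refinement via summing support functions, and that nonzero lattice points of $\Delta_2$ lie on its boundary), but the underlying argument is the same.
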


\begin{proof} Let $\Sigma_{int}$ be the intersection fan whose cones consist of  intersections of cones $C_i \cap C_j$ where $C_i \in \widehat{\Sigma}(\Delta_1)$ and $C_j \in \Sigma(\Delta_2)$.  The rays of $\widehat{\Sigma}(\Delta_1)$ and $\Sigma(\Delta_2)$ consist only of rays over lattice points in $\Delta_2$.  Because the fans are three-dimensional, any new rays in $\Sigma_{int}$ must be of the form $C_1 \cap C_2$ where $C_2 \in \Sigma(\Delta_2)$ is a two-dimensional cone and $C_1 \in \widehat{\Sigma}(\Delta_1)$ is another two-dimensional cone, and $C_1 \cap C_2$ intersects the relative interior of both $C_i$.  We can now apply Proposition \ref{threedim} with $\Delta = \Delta_2$ to conclude that the primitive integral generator of $C_1 \cap C_2$ must be a lattice point in $\Delta_2$.  

This means that $\Sigma_{int}$ must be a partial crepant subdivision of the fan $\Sigma(\Delta_2)$, since all its rays are rays over lattice points in $\Delta_2$.  It may not be maximal, i.e., it may have rays over only some of the lattice points in $\Delta_2$ as its one-dimensional cones, but using the method of \cite{GKZ}, we may refine $\Sigma_{int}$ to a MPCP resolution $\widehat{\Sigma}(\Delta_2)$.  Since $\Sigma_{int}$ is a subdivision of $\widehat{\Sigma}(\Delta_1)$, $\widehat{\Sigma}(\Delta_2)$ will be as well.
\end{proof}

\section{Four and higher dimensions}

The following generic smoothness lemma will be needed later on.

\begin{lemma} \label{genericsmth} Let $a_0, \dots, a_r \in \mathbb{C}$, and let $$f(z_1, \dots, z_r) = a_0+\sum_{i=1}^r a_i z^{m_i}$$ be a regular function on $$(\mathbb{C}^*)^s \times \mathbb{C}^t \cong \hbox{Spec} \ \mathbb{C}[z^{\pm 1}_1, \dots, z^{\pm 1}_s, z_{s+1}, \dots, z_{s+t}],$$ where $z^{m_i}$ for $1 \leq i \leq r$ are nonconstant Laurent monomials.  Then for generic values of $a_0, \dots, a_r$, the affine variety $V(a_0, \dots, a_r) \subseteq (\mathbb{C}^*)^s \times \mathbb{C}^t$ defined by $f=0$ is nonsingular. \end{lemma}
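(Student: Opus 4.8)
The plan is to recognize each $V(a_0, \dots, a_r)$ as a hyperplane section of a single fixed image in projective space, and then to invoke Bertini-type generic smoothness. First I would write $U = (\mathbb{C}^*)^s \times \mathbb{C}^t$, which is smooth, and introduce the regular functions $\psi_0 = 1$ and $\psi_i = z^{m_i}$ for $1 \le i \le r$, so that $f = \sum_{i=0}^r a_i \psi_i$. The crucial observation is that the constant function $\psi_0 = 1$ never vanishes, so the $\psi_i$ have no common zero on $U$; equivalently, the map
$$\psi \colon U \to \mathbb{P}^r, \qquad z \mapsto [\psi_0(z) : \cdots : \psi_r(z)],$$
is defined at every point of $U$ (the associated linear system is base-point-free). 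Under $\psi$ the variety $V(a_0,\dots,a_r)$ is exactly the preimage $\psi^{-1}(H_a)$ of the hyperplane $H_a = \{\, \sum_i a_i w_i = 0 \,\} \subseteq \mathbb{P}^r$.

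With this set up, I would form the universal hyperplane section
$$\widetilde U = \Big\{ (z,[a]) \in U \times (\mathbb{P}^r)^* \ \Big| \ \textstyle\sum_{i=0}^r a_i \psi_i(z) = 0 \Big\}$$
and show it is smooth. For fixed $z$ the equation $\sum_i a_i \psi_i(z) = 0$ is a nonzero linear form in $a$, since its $a_0$-coefficient is $\psi_0(z) = 1$; hence the fibre of the first projection $\widetilde U \to U$ over $z$ is a hyperplane $\mathbb{P}^{r-1} \subseteq \mathbb{P}^r$, and $\widetilde U \to U$ is a $\mathbb{P}^{r-1}$-bundle over the smooth variety $U$, so $\widetilde U$ is smooth. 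This is precisely where the constant term $a_0$ is used. I would then apply the generic smoothness theorem in characteristic zero to the second projection $\pi \colon \widetilde U \to (\mathbb{P}^r)^*$: there is a dense open $W \subseteq (\mathbb{P}^r)^*$ over which $\pi$ is smooth, so for $[a] \in W$ the fibre $\pi^{-1}([a]) = V(a_0,\dots,a_r)$ is nonsingular (an empty fibre, which occurs if $\pi$ is not dominant, being vacuously smooth). Since $V(a_0,\dots,a_r)$ depends only on the class $[a]$, a generic $(a_0,\dots,a_r) \in \mathbb{C}^{r+1}$ maps into $W$, which proves the lemma.

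The step I expect to require the most care is the smoothness of $\widetilde U$, together with the verification that $\psi$ is genuinely a morphism to all of $\mathbb{P}^r$. It is tempting to quote the classical Bertini theorem for hyperplane sections of a smooth \emph{embedded} variety, but that is not directly available here: $\psi$ need not be an immersion, because the monomials $z^{m_i}$ may fail to separate points or tangent directions (for instance if some coordinate appears in no $m_i$, so that a naive Jacobian count on the singular incidence locus is not tight). What rescues the argument is that we never need $\psi$ to be an embedding — only that $\widetilde U$ is smooth and that the ground field has characteristic zero, so that generic smoothness applies to $\pi$ as stated. A secondary point to track is that $U$ is only quasi-affine and $\pi$ may be non-dominant; both are harmless, since generic smoothness requires neither properness of the source nor dominance, and empty general fibres cause no difficulty.
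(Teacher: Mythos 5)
Your proposal is correct, and its core mechanism is the same as the paper's: both arguments pass to the universal hypersurface $\{(z,a) \mid a_0 + \sum_i a_i z^{m_i} = 0\}$ over the parameter space, observe that this total space is smooth precisely because of the constant term (the paper notes $\partial/\partial a_0$ of the defining equation is $1$; your base-point-freeness of the system $\psi_0=1,\psi_1,\dots,\psi_r$ is the same observation in disguise), and then conclude by a generic-smoothness statement for the projection onto the parameters. Where you differ is in the packaging and in the key theorem invoked at the end. The paper stays affine: the incidence variety $Z$ sits inside $\mathbb{C}^{r+1}\times(\mathbb{C}^*)^s\times\mathbb{C}^t$ and Sard's theorem is applied to $\pi|_Z$, so the bad parameter values form a set of measure zero. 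You projectivize: parameters live in $(\mathbb{P}^r)^*$, the incidence variety is exhibited as a $\mathbb{P}^{r-1}$-bundle over $U$ (for the local triviality implicit in that claim, it is cleanest to note that $\widetilde U$ is the projectivization of the kernel of the surjection $\mathcal{O}_U^{r+1}\to\mathcal{O}_U$ given by $(\psi_0,\dots,\psi_r)$), and the characteristic-zero generic smoothness theorem is applied to $\pi\colon\widetilde U\to(\mathbb{P}^r)^*$. Your route buys a sharper form of the conclusion — the good parameters contain a dense Zariski-open set, rather than merely the complement of a measure-zero set — which is arguably the form of ``generic'' one wants in an algebro-geometric statement (the paper's conclusion can be upgraded to this by constructibility of the critical values, but it does not say so). The paper's route is shorter: it needs no projective-bundle verification and no discussion of non-dominance or empty fibres, since Sard's theorem applies directly to the affine picture.
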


\begin{proof} Let 
$$Z \subseteq \hbox{Spec} \ \mathbb{C}[a_0, \dots, a_r, z^{\pm 1}_1, \dots, z^{\pm 1}_s, z_{s+1}, \dots, z_{s+t}]$$ 
be the hypersurface defined by $$a_0+\sum_{i=1}^r a_i z^{m_i}=0,$$
and let $$\pi: \hbox{Spec} \ \mathbb{C}[a_0, \dots, a_r, z^{\pm 1}_1, \dots, z^{\pm 1}_s, z_{s+1}, \dots, z_{s+t}] \rightarrow \hbox{Spec} \ \mathbb{C}[a_0, \dots, a_r]$$ be the projection.  The fibers of $\pi|_{Z}$ are the varieties $V(a_0, \dots, a_r)$ for all possible values of $a_0, \dots, a_r$.  The hypersurface $Z$ is nonsingular, because the partial derivative of the defining equation for $Z$ with respect to $a_0$ is 1 and so its gradient is nowhere vanishing.  Applying Sard's theorem, the set of critical values of $\pi|_Z$ has measure zero, and we conclude that the inverse images of $\pi|_Z$ are nonsingular except for a set of values of $a_0, \dots, a_r$ with measure zero.
\end{proof}

In higher dimensions, a result like Theorem \ref{transexistence} is not possible. We will give an example of four dimensional reflexive polytopes $\Delta_1 \subseteq \Delta_2$ where the toric varieties $X(\Delta_1)$ and $X(\Delta_2)$ are already smooth (so no resolution is needed to produce a smooth CY family) but the intersection fan $\Sigma_{int}$ of $\Sigma(\Delta_1)$ and $\Sigma(\Delta_2)$ contains rays not in $\Sigma(\Delta_1)$ or $\Sigma(\Delta_2)$.  This leads to an interesting family of Calabi-Yau threefolds.

For the remainder of the paper, we will let $\Delta_1 \subseteq \Delta_2 \subseteq \mathbb{R}^4$ be the nested four dimensional reflexive polytopes defined by
\begin{gather*}
\Delta_1 = Conv((1,0,0,0),(0,1,0,0),(0,0,1,0), \\
(0,0,0,1),(-1,-1,-1,-1))
\end{gather*}
\begin{gather*}
\Delta_2 = Conv((1,0,0,0),(0,1,0,0),(0,0,1,0),(0,0,0,1), \\ 
(-1,-1,-1,-1),(1,1,1,1),(0,0,0,-1))
\end{gather*}
The toric varieties associated to $\Sigma(\Delta_1)$ and $\Sigma(\Delta_2)$ are smooth, so no MPCP resolutions are required to define smooth Calabi-Yau families $\mathcal{X}(\Delta_1)$ and $\mathcal{X}(\Delta_2)$.  The obvious toric morphism from $X(\Delta_2)$ to $X(\Delta_1)$ fails to exist, because the intersection fan of $\Sigma(\Delta_1)$ and $\Sigma(\Delta_2)$ contains a new ray, the ray over $(1,1,1,0)$.  This ray is the intersection of the cone over $$Conv((1,1,1,1),(0,0,0,-1))$$ in $\Sigma(\Delta_2)$ and the cone over $$Conv((1,0,0,0),(0,1,0,0),(0,0,1,0))$$ in $\Sigma(\Delta_1)$.  The lattice point $(1,1,1,0)$ is not contained in either $\Delta_1$ or $\Delta_2$.

The toric variety $X(\Delta_1)$ is just projective space $\mathbb{P}^4$, and the family $\mathcal{X}(\Delta_1)$ is the familiar family of quintic threefolds.  By setting all coefficients on variables $z^m$ with $m \in (\Delta^*_1 \backslash \Delta^*_2) \cap M$ equal to zero, we obtain a singular family $\mathcal{X}_0(\Delta_1)$.  We will show that generic members of the singular family have two isolated singularities at zero-dimensional toric strata of $\mathbb{P}^4$.  Blowing up at these points results in a smooth family of Calabi-Yau threefolds.

Let $\mathbb{P}^4_{bl}$ be the toric variety obtained by blowing up $\mathbb{P}^4$ at the zero-dimensional toric strata corresponding to the cones over the maximal faces 
$$Conv((1,0,0,0),(0,1,0,0),(0,0,1,0),(0,0,0,1))$$
and
$$Conv((1,0,0,0),(0,1,0,0),(0,0,1,0),(-1,-1,-1,-1)).$$
Thus, the fan for $\mathbb{P}^4_{bl}$, $\Sigma_{bl}$, is a subdivision of the fan $\Sigma(\Delta_1)$ given by adding rays over the points $(1,1,1,1)$ and $(0,0,0,-1)$.  The piecewise linear function $\varphi_{bl} : M_\mathbb{R} \rightarrow \mathbb{R}$ on $\Sigma_{bl}$ corresponding to the anticanonical bundle $\mathcal{L}_{bl}$ of $\mathbb{P}^4_{bl}$ is defined by $\varphi_{bl}(u) = 1$ for each primitive integral generator $u$ of a ray in $\Sigma_{bl}$.  The function $\varphi_{bl}$ is not convex, which can be seen by noting that $$\varphi_{bl}((1,1,1,1)+(0,0,0-1)) = 3 \geq \varphi_{bl}(1,1,1,1)+\varphi_{bl}(0,0,0,-1) = 2.$$  Thus, $\mathcal{L}_{bl}$ is not ample or even semi-ample (generated by its global sections).

The primitive integral generators of rays in $\Sigma_{bl}$ are the same as those in $\Sigma(\Delta_2)$.  We have another piecewise linear function $\varphi_2$ on $\Sigma(\Delta_2)$ which corresponds to the anticanonical bundle $\mathcal{L}_2$ of $X(\Delta_2)$.  Because $\varphi_{bl} \geq \varphi_2$, $\Delta^*_2 = Newt(\varphi_2) \subseteq Newt(\varphi_{bl})$, so all monomial global sections of $\mathcal{L}_2$, $z^m$ with $m \in \Delta^*_2$, can also be regarded as global sections of $\mathcal{L}_{bl}$.  This means that the equation $$\sum_{m \in (\Delta^*_2 \cap M)} c_m z^m = 0,$$ where the $c_m \in \mathbb{C}$ are generic coefficients and the LHS is regarded as a global section of $\mathcal{L}_{bl}$, defines a family $\mathcal{X}_{bl}$ in $\mathbb{P}^4_{bl}$ which is birational to both $\mathcal{X}_0(\Delta_1)$ and $\mathcal{X}(\Delta_2)$.  

\begin{prop} Generic members of the family $\mathcal{X}_{bl}$ are smooth projective Calabi-Yau threefolds and isomorphic to the resolution of generic members of $\mathcal{X}_0(\Delta_1)$ given by blowing up at their two isolated singular points.  \end{prop}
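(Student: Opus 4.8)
The plan is to carry out everything through the blow-down map $\pi : \mathbb{P}^4_{bl} \to \mathbb{P}^4$, which contracts the two exceptional divisors $E_1, E_2 \cong \mathbb{P}^3$ lying over the torus-fixed points $p_1, p_2$ associated to the two maximal cones that were subdivided. Writing the homogeneous coordinates of $\mathbb{P}^4$ as $y_0, \dots, y_4$ (with $y_0$ dual to the ray $(-1,-1,-1,-1)$ and $y_4$ dual to $(0,0,0,1)$), one has $p_1 = [1:0:0:0:0]$ and $p_2 = [0:0:0:0:1]$. Since blowing up a point of a smooth fourfold produces discrepancy $3$, the first step is to record that $\mathcal{L}_{bl} = -K_{\mathbb{P}^4_{bl}} = \pi^*\mathcal{O}(5) - 3E_1 - 3E_2$. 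Next I would do the bookkeeping of sections: the monomials $z^m$ with $m \in \Delta^*_2 \cap M$ correspond exactly to the degree-$5$ monomials in the $y_i$ whose exponents $a_0$ and $a_4$ are each at most $2$, hence they vanish to order $\geq 3$ at each $p_i$, with order exactly $3$ for generic coefficients. Consequently $\sum_m c_m z^m$, read as a section of $\mathcal{L}_{bl}$, cuts out precisely the strict transform of the corresponding member of $\mathcal{X}_0(\Delta_1)$, and a generic $\mathcal{X}_0(\Delta_1)$ has multiplicity exactly $3$ at $p_1$ and $p_2$.

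The heart of the argument is the smoothness of generic $\mathcal{X}_{bl}$, and the obstacle here is that the linear system spanned by $\{z^m : m \in \Delta^*_2 \cap M\}$ is not base-point free: a short support computation shows its base locus is the line $\ell = \{y_1 = y_2 = y_3 = 0\}$ joining $p_1$ and $p_2$. Away from $\ell$ the system is base-point free, so I would apply Lemma \ref{genericsmth} orbit by orbit (on each torus orbit off $\ell$ some available monomial restricts to a nonvanishing function, which supplies the term with nonvanishing differential in the lemma; equivalently one may invoke Bertini), concluding that generic $\mathcal{X}_0(\Delta_1)$ is smooth on $\mathbb{P}^4 \setminus \ell$. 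Along $\ell$ itself no such unit exists, so this stratum must be handled by hand: computing the linear part of the defining equation at a general point $[1:0:0:0:\lambda]$ of $\ell$, the monomials $y_0^2 y_4^2 y_i$ (for $i = 1,2,3$) contribute partials proportional to $\lambda^2$, which are nonzero for $\lambda \neq 0$ and generic $c_m$. Hence $\mathcal{X}_0(\Delta_1)$ is smooth along $\ell \setminus \{p_1, p_2\}$, so its singular locus is exactly the two points $p_1, p_2$, and upstairs $\mathcal{X}_{bl}$ is smooth everywhere except possibly along $E_1 \cup E_2$.

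It then remains to examine $\mathcal{X}_{bl}$ along the exceptional divisors. In blow-up charts the strict transform meets $E_i$ in the projectivized tangent cone $C_i = \{f_3 = 0\} \subseteq \mathbb{P}^3$, where $f_3$ is the degree-$3$ part of the local equation. The key observation is that $f_3$ ranges over all cubic forms except the cube of the coordinate tangent to $\ell$, so the cubic system on $E_i$ has a single base point, namely $\tilde\ell \cap E_i$; checking smoothness at that point by hand and applying generic smoothness elsewhere shows $C_i$ is smooth for generic coefficients. Since the local equation has the shape $f_3 + u f_4 + u^2 f_5$ with $E_i = \{u = 0\}$, an Euler-relation argument upgrades smoothness of $C_i$ to smoothness of the strict transform along all of $E_i$: a singular point on $E_i$ would force both $f_3 = 0$ and the vanishing of the projective gradient of $f_3$, contradicting smoothness of $C_i$. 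Thus a single blow-up of each triple point resolves it, and combining the three strata shows generic $\mathcal{X}_{bl}$ is smooth.

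Finally I would assemble the conclusion. The space $\mathbb{P}^4_{bl}$ is projective (a blow-up of $\mathbb{P}^4$ at two points), so $\mathcal{X}_{bl}$ is a smooth projective threefold, and since $\mathcal{X}_{bl} \in |-K_{\mathbb{P}^4_{bl}}|$, adjunction gives $K_{\mathcal{X}_{bl}} = \mathcal{O}$, so it is Calabi-Yau. Because $\mathcal{X}_{bl}$ is the strict transform of $\mathcal{X}_0(\Delta_1)$ under the blow-up of $p_1$ and $p_2$, and these are precisely the isolated singular points of $\mathcal{X}_0(\Delta_1)$ established above, the strict transform coincides with the blow-up of $\mathcal{X}_0(\Delta_1)$ at its two singular points, giving the claimed isomorphism. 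I expect the main obstacle to be the base-locus line $\ell$ together with its strict transform: everything off $\ell$ is routine generic smoothness, but $\ell$ lies in the non–base-point-free part of the system (indeed $\mathcal{L}_{bl}$ is not even semi-ample) and runs into each exceptional $\mathbb{P}^3$, so it is the explicit tangent-cone computation along $\ell$ that both pins down the two isolated triple points and verifies that a single blow-up of each resolves them.
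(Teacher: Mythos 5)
Your proof is correct, and it reaches the statement by a genuinely different route than the paper. The paper works entirely upstairs in $\mathbb{P}^4_{bl}$, chart by chart: for every maximal cone of $\Sigma_{bl}$ except the two cones over $F_1$ and $F_2$, some vertex $m$ of $\Delta^*_2$ pairs to $-1$ with all generators of the cone, so the local equation acquires a constant term and Lemma \ref{genericsmth} applies directly; on the two remaining charts $U_{C_i} \cong \mathbb{C}^4$ the paper writes the equation as $c_1x+c_2y+c_3z+\cdots$ and excludes singular points on the axis $x=y=z=0$ by a dimension count (the values of $(c_1,c_2,c_3)$ admitting a singular point sweep out only a one-dimensional set as $w$ varies), and only afterwards deduces the downstairs facts about $\mathcal{X}_0(\Delta_1)$ (isolated singular points with cubic leading term) from upstairs smoothness. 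You go in the opposite direction: you first analyze $\mathcal{X}_0(\Delta_1)$ in $\mathbb{P}^4$, identifying $\mathcal{L}_{bl}=\pi^*\mathcal{O}(5)-3E_1-3E_2$ and $\mathcal{X}_{bl}$ as a strict transform, locating the base locus $\ell$, applying Bertini off $\ell$ and an explicit partial-derivative computation along $\ell\setminus\{p_1,p_2\}$, and then you handle $E_1\cup E_2$ by showing the projectivized tangent cones are smooth cubic surfaces (the cubic system misses only the cube of the direction tangent to $\ell$, and its one base point is checked by hand) and upgrading via the Euler relation. Both arguments ultimately turn on the same essential difficulty, since your line $\ell$ and its strict transform are exactly the loci where the paper's charts lack a constant term; you just neutralize that locus with tangent-cone geometry where the paper uses a coefficient-space dimension count. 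Your version is longer but buys concrete geometric information the paper never makes explicit --- the exceptional surfaces of $Y \rightarrow \mathcal{X}_0(\Delta_1)$ are smooth cubic surfaces, and the discrepancy/multiplicity bookkeeping is laid bare --- which meshes nicely with the paper's later observation that $H^2(Y,\mathbb{Q})$ has a basis of mutually disjoint divisors; the paper's version is shorter and more uniform. One small wording caveat: applying Lemma \ref{genericsmth} literally ``orbit by orbit'' is not quite right, since smoothness of $V(f)\cap O$ inside an orbit $O$ does not by itself control the transverse directions (this matters precisely because $\ell \subseteq V(f)$); the correct implementation is chart by chart on the affine opens $U_\tau \cong (\mathbb{C}^*)^s\times\mathbb{C}^t$ of cones $\tau$ not containing the cone over $Conv((1,0,0,0),(0,1,0,0),(0,0,1,0))$, where one of the monomials $y_i^5$ trivializes to the constant term --- or, as you say, one simply invokes Bertini away from the base locus, which suffices.
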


\begin{proof} Lemma \ref{genericsmth} can be applied to show that a generic member of $\mathcal{X}_{bl}$ will be nonsingular everywhere except the zero-dimensional toric strata of $\mathbb{P}^4$ corresponding to the cones over the sets 
$$F_1 = Conv((1,0,0,0),(0,1,0,0),(0,0,1,0),(1,1,1,1))$$
and 
$$F_2 = Conv((1,0,0,0),(0,1,0,0),(0,0,1,0),(0,0,0,-1)).$$
This is because the vertices of $\Delta^*_2$ are the columns of the matrix
\[ \left(
\begin{array}{rrrrrrrrrrrr}
4 & -1 & -1 & 2 & -1 & -1 & 0 & -1 & -1 & 2 & -1 & -1 \\
-1 & 4 & -1 & -1 & 2 & -1 & -1 & 0 & -1 & -1 & 2 & -1 \\
-1 & -1 & 4 & -1 & -1 & 2 & -1 & -1 & 0 & -1 & -1 & 2 \\
-1 & -1 & -1 & -1 & -1 & -1 & 1 & 1 & 1 & 1 & 1 & 1 
\end{array} \right). \]
For every maximal cone $C \in \Sigma_{bl}$, except for the cones over $F_1$ and $F_2$, there is a vertex $m$ in the above list for which $\langle m, n \rangle = -1$ for all primitive integral generators $n$ of extremal rays of $C$.  This means that in local coordinates for the open affine subvariety $U_C \subseteq \mathbb{P}^4_{bl}$ corresponding to $C$, $z^m = 1$, so the local defining equation of $\mathcal{X}_{bl} \cap U_C$ has a leading constant term and the result of Lemma \ref{genericsmth} applies.

For the two other cones $C_1$, $C_2$ which are respectively cones over the sets $F_1$ and $F_2$, the open affine subsets $U_{C_1}$ and $U_{C_2}$ are both isomorphic to $\mathbb{C}^4$.  
The same proof applies to $C_1$ and $C_2$ so we just look at the case of $U_{C_1} \cong \hbox{Spec} \ \mathbb{C}[x,y,z,w]$.  On $U_{C_1}$ the monomials $z^m$ with $m = (0,-1,-1,1)$, $(-1,0,-1,1),$ or $(-1,-1,0,1)$ are free variables (say $x$, $y$, $z$) and all other lattice points in $\Delta^*_2$ correspond to higher order terms, so the defining equation of $\mathcal{X}_{bl} \cap U_{C_1}$ may be written as $f(x,y,z,w) = c_1x+c_2y+c_2z+\cdots=0$ where the omitted monomials are degree 2 or higher in the variables $x, y, z, w$.  If a point of $U_{C_1}$ has not all $x, y, z = 0$, then the point is contained in a chart other than $U_{C_1}$ or $U_{C_2}$ and the argument of the previous paragraph applies.  When $x=y=z=0$, we get that $$(\nabla f)(0,0,0,w) = (c_1 + q_1(w), c_2+q_2(w), c_3+q_3(w), q_4(w)),$$ where $q_1, q_2, q_3, q_4$ are some (not necessarily nonzero) polynomials in $w$ with generic coefficients.  If there is a value of $w$ such that $(\nabla f)(0,0,0,w) = 0$, then we must have that $c_1 = -q_1(w)$, $c_2 = -q_2(w)$, $c_3= -q_3(w)$.  For any particular choice of coefficients of the $q_i$, as $w$ varies over $\mathbb{C}$ this is at most a one-dimensional subset of the space of possible values of $c_1, c_2, c_3 \in \mathbb{C}$.  Thus, generic values of $c_1, c_2, c_3$ will result in a smooth variety.

The fact that each smooth member of $\mathcal{X}_{bl}$ is Calabi-Yau follows from the fact that it is defined by a global section of the anticanonical bundle $\mathcal{L}_{bl}$.  

Lastly, the open subset of a member of $\mathcal{X}_{bl}$ given by removing the exceptional divisors from $\mathbb{P}^4_{bl}$ (corresponding to the rays over $(1,1,1,1)$ and $(0,0,0,-1)$ in $\Sigma_{bl}$) is isomorphic to the open subset of the corresponding member of $\mathcal{X}_0(\Delta_1)$ given by removing the two zero dimensional toric strata corresponding to the cones over
$$Conv((1,0,0,0),(0,1,0,0),(0,0,1,0),(0,0,0,1))$$
and
$$Conv(((1,0,0,0),(0,1,0,0),(0,0,1,0),(-1,-1,-1,-1)).$$
Since we have proven that generic members of $\mathcal{X}_{bl}$ are smooth, this shows that generic members of $\mathcal{X}_0(\Delta_1)$ are smooth except possibly at these two points.  In local coordinates at these points, the homogeneous quintic defining $\mathcal{X}_0(\Delta_1)$ has the form $$p(x_1, x_2, x_3, x_4)+\cdots$$ where $p$ is a homogeneous cubic and the omitted terms are of higher degree.  This results in a singularity where $x_1 = x_2= x_3 =x_4=0$, so members of $\mathcal{X}_0(\Delta_1)$ have singularities at both of these zero dimensional strata.  Blowing up at these points resolves generic members of $\mathcal{X}_0(\Delta_1)$ to smooth members of $\mathcal{X}_{bl}$.
\end{proof}

For the remainder of the paper, we will analyze the topology of members of $\mathcal{X}_{bl}$.  Because this family is birational to the family $\mathcal{X}(\Delta_2)$,  the result of \cite{batyrevbirational} tells us that members of these families have the same Betti numbers.  Thus, to prove they are topologically distinct, we need a finer invariant.  We will approach the problem by calculating the trilinear form induced by cup product on $H^2(Y,\mathbb{Q})$, where $Y$ is the Calabi-Yau manifold of interest.  In the case of both $\mathcal{X}_{bl}$ and $\mathcal{X}(\Delta_2)$, the even cohomology of the Calabi-Yaus is isomorphic to the image of the cohomology of the ambient toric variety.  Thus, for instance, if $i : Y \hookrightarrow \mathbb{P}^4_{bl}$ is inclusion of a member of $\mathcal{X}_{bl}$, the map $i^* : H^i(\mathbb{P}^4_{bl},\mathbb{Q}) \rightarrow H^i(Y,\mathbb{Q})$ is surjective for $i$ even. Furthermore, the kernel of $i^*$ is the same as the kernel of the cup product map $$\cup [Y]: H^i(\mathbb{P}^4_{bl}, \mathbb{Q}) \rightarrow H^{i+2}(\mathbb{P}^4_{bl}, \mathbb{Q}).$$  Since the singular cohomology of a toric variety is well understood, this makes the computations straightforward, especially if aided with computer algebra software.

We have the following theorem which characterizes rational cohomology of complete simplicial toric varieties (\cite{CLS}, Theorem 12.4.1):

\begin{thm}\label{toriccohomo} Let $X(\Sigma)$ be a complete simplicial toric variety and let $\rho_i$ for $1 \leq i \leq r$ be the rays of the fan $\Sigma$.  Let $u_i$ be the primitive integral generator of $\rho_i$ and introduce a variable $x_i$ for each $\rho_i$.  In the ring $\mathbb{Q}[x_1, \dots, x_r]$, let $SR(\Sigma)$ be the Stanley-Reisner ideal 
$$SR(\Sigma) = \langle x_{i_1} \cdots x_{i_s} \ | \ i_j \ \hbox{distinct and} \ \rho_{i_1} + \cdots + \rho_{i_s} \ \hbox{is not a cone of} \ \Sigma \rangle.$$
Let $J \subseteq \mathbb{Q}[x_1, \dots, x_r]$ be the ideal generated by the linear forms $$\sum_{i=1}^r \langle m, u_i \rangle x_i$$ as $m$ ranges over $M$ (equivalently, over a $\mathbb{Z}$-basis of $M$).  Then if each $x_i$ is assigned degree 2, $$\mathbb{Q}[x_1, \dots, x_r]/(J+SR(\Sigma)) \cong H^*(X(\Sigma),\mathbb{Q})$$ as graded rings.  In particular, all odd rational cohomology is zero. \end{thm}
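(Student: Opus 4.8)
The plan is to prove the asserted presentation in three stages: first produce a cell decomposition of $X(\Sigma)$ showing that odd cohomology vanishes and computing the Betti numbers; then define the candidate ring map $x_i \mapsto [D_i]$ and check that the relations $J$ and $SR(\Sigma)$ lie in its kernel; and finally match graded dimensions to upgrade the resulting surjection to an isomorphism. (An essentially equivalent route is to show $X(\Sigma)$ is equivariantly formal with equivariant cohomology equal to the face ring of $\Sigma$, then pass to ordinary cohomology by killing the image of $M$; I will use the more self-contained topological version.)

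First I would fix a generic one-parameter subgroup $\lambda : \mathbb{C}^* \to T$ and form the associated Bialynicki--Birula decomposition of $X(\Sigma)$ into the attracting sets of the torus-fixed points. The fixed points are in bijection with the $n$-dimensional (maximal) cones $\sigma$ of the complete fan $\Sigma$, and the attracting set of each is rationally a cell: a quotient of an affine space by a finite abelian group, which over $\mathbb{Q}$ has the cohomology of a point. Genericity of $\lambda$ forces each cell to have a well-defined even real dimension, so the cellular filtration has vanishing differentials for parity reasons. This yields $H^{\mathrm{odd}}(X(\Sigma),\mathbb{Q}) = 0$ and shows that $\dim_\mathbb{Q} H^{2k}(X(\Sigma),\mathbb{Q})$ equals the number of cells of complex dimension $k$, which is the $h$-vector entry $h_k$ of the simplicial fan.

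Next I would define the map $\mathbb{Q}[x_1,\dots,x_r] \to H^*(X(\Sigma),\mathbb{Q})$ by sending $x_i$ to the class $[D_i]$ of the torus-invariant prime divisor attached to the ray $\rho_i$. The linear relations hold because, for each $m \in M$, the character $\chi^m$ is a rational function whose principal divisor is $\sum_i \langle m, u_i \rangle D_i$, so $\sum_i \langle m, u_i \rangle x_i$ maps to zero in $H^2$. The Stanley--Reisner relations hold because $D_{i_1}, \dots, D_{i_s}$ have empty common intersection exactly when $\rho_{i_1}, \dots, \rho_{i_s}$ span no cone, forcing the product of the corresponding classes to vanish. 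That the classes $[D_i]$ generate $H^*(X(\Sigma),\mathbb{Q})$ as a ring follows from the first stage: the orbit closures $V(\sigma)$ span the cohomology, and each $[V(\sigma)]$ is a positive rational multiple of the product of the $[D_i]$ over the rays of $\sigma$. Hence the map is a surjective homomorphism of graded rings.

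The crux, and the step I expect to be the main obstacle, is injectivity, which I would establish by a graded dimension count rather than by hand. On the algebraic side, completeness of $\Sigma$ guarantees that the $n$ linear forms coming from a basis of $M$ form a homogeneous system of parameters for the Stanley--Reisner ring $\mathbb{Q}[x_1,\dots,x_r]/SR(\Sigma)$, and Reisner's criterion shows this ring is Cohen--Macaulay since the fan is combinatorially the face complex of a sphere $S^{n-1}$. By Stanley's theorem, the Artinian quotient $\mathbb{Q}[x_1,\dots,x_r]/(J + SR(\Sigma))$ then has Hilbert function equal to $h_k$ in degree $2k$ and vanishes in odd degrees. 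Comparing with the first stage, the surjection $\mathbb{Q}[x_1,\dots,x_r]/(J+SR(\Sigma)) \to H^*(X(\Sigma),\mathbb{Q})$ is a degree-preserving map of finite-dimensional graded vector spaces whose source and target have equal dimension in every degree, so it must be an isomorphism. The genuinely delicate points are verifying the Cohen--Macaulayness and the system-of-parameters claim (both of which hinge on completeness of the fan) and confirming the combinatorial identity that both sides have Hilbert function $h_k$; once these are in place the graded pieces match and the surjection is forced to be injective.
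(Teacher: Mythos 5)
This statement is not proved in the paper at all: it is the classical Jurkiewicz--Danilov theorem, quoted verbatim as Theorem 12.4.1 of the cited reference \cite{CLS}, so there is no internal proof to compare against. Your argument is correct, and it is in fact essentially the proof given in that reference: the Bialynicki--Birula/shelling decomposition indexed by maximal cones (giving rational cells of even dimension, hence odd-cohomology vanishing and Betti numbers equal to the $h$-vector), the ring map $x_i \mapsto [D_i]$ whose kernel visibly contains $J + SR(\Sigma)$ and which is surjective because orbit-closure classes span and satisfy $[V(\sigma)] = (\hbox{positive rational})\cdot\prod [D_{i_j}]$, and finally injectivity by the Stanley--Reisner/Cohen--Macaulay count showing the Artinian quotient has Hilbert function $h_k$ in degree $2k$. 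Two points you compress deserve care but are standard: first, $X(\Sigma)$ may be singular (only an orbifold), so the ``cells'' are finite quotients of affine spaces and one should run the filtration argument in Borel--Moore homology or compactly supported cohomology with $\mathbb{Q}$-coefficients; second, the claim that orbit closures span needs either the observation that each Bialynicki--Birula cell contains a unique dense torus orbit (so cell closures \emph{are} orbit closures) or the Chow-group argument plus the isomorphism $A_*(X)_{\mathbb{Q}} \cong H_*(X,\mathbb{Q})$ for complete simplicial toric varieties. With those caveats filled in, your proof is complete and matches the standard one.
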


We will also need to use the existence of the Gysin homomorphism and its relation to cup products.  Let $i : Y \hookrightarrow X$ be an inclusion of an irreducible compact complex subvariety $Y$ into a smooth compact complex variety $X$.  Then there is a map $i_! : H^k(Y, \mathbb{Q}) \rightarrow H^{k+2 \dim X - 2 \dim Y}(X, \mathbb{Q})$, such that $i_!(i^*(\alpha)) = \alpha \cup [Y]$ for all $\alpha \in H^*(X, \mathbb{Q})$, where $[Y] \in H^{2 \dim X - 2 \dim Y}(X, \mathbb{Q})$ is the cohomology class of $Y$.  (See \cite{CLS}, Proposition 13.A.6.)

\begin{prop} For smooth members $Y \in \mathcal{X}_{bl}$, the kernel of the map $$i^* : H^*(\mathbb{P}^4_{bl}, \mathbb{Q}) \rightarrow H^*(Y, \mathbb{Q})$$ is equal to the kernel of the cup product map $$\cup [Y] : H^i(\mathbb{P}^4_{bl}, \mathbb{Q}) \rightarrow H^{i+2}(\mathbb{P}^4_{bl}, \mathbb{Q}).$$  The map $i^*$ is surjective onto even-dimensional cohomology of $Y$. \end{prop}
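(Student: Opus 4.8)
The plan is to isolate the surjectivity of $i^*$ as the essential geometric input and then to obtain the equality of kernels from it by a formal argument using the Gysin map and Poincar\'e duality. Write $X = \mathbb{P}^4_{bl}$, and recall from Theorem \ref{toriccohomo} that $X$, being a smooth complete toric variety, has cohomology concentrated in even degrees and generated as a ring in degree two. One inclusion of kernels is immediate and uses only the Gysin relation: if $i^*\alpha = 0$ then $\alpha \cup [Y] = i_!(i^*\alpha) = 0$, so $\ker i^* \subseteq \ker(\cup[Y])$. For the converse I would introduce the bilinear form $B$ on $H^*(X,\mathbb{Q})$ defined by $B(\alpha,\beta) = \int_X \alpha\cup\beta\cup[Y]$. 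Since $i_!$ is the cohomological pushforward we have $\int_X i_!(\gamma) = \int_Y \gamma$, and combining this with $i_!(i^*\gamma) = \gamma\cup[Y]$ gives the key identity $B(\alpha,\beta) = \int_Y i^*\alpha \cup i^*\beta$. Because $X$ is smooth and compact with only even cohomology, Poincar\'e duality makes $\int_X(\cdot)\cup(\cdot)$ nondegenerate, and hence the radical of $B$ is exactly $\ker(\cup[Y])$. On the other hand the cup-product pairing on $Y$ restricts to a nondegenerate form on $H^{even}(Y)$ (it pairs $H^0$ with $H^6$ and $H^2$ with $H^4$, and kills odd classes for degree reasons); so \emph{once $i^*$ is known to be surjective onto $H^{even}(Y)$}, the identity $B(\alpha,\beta) = \int_Y i^*\alpha\cup i^*\beta$ exhibits $B$ as the pullback of a nondegenerate form along a surjection, whence the radical of $B$ is precisely $\ker i^*$. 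Comparing the two computations of the radical yields $\ker i^* = \ker(\cup[Y])$.

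There remains the surjectivity of $i^* : H^{even}(X) \to H^{even}(Y)$, which I would reduce to two checkable facts. The first is the numerical input that $b_2(Y) = b_2(X) = 3$: the smooth Fano $X(\Delta_2)$ has ample anticanonical bundle, so weak Lefschetz gives $H^2(X(\Delta_2)) \cong H^2(Y')$ for a smooth member $Y' \in \mathcal{X}(\Delta_2)$, and since $\Sigma_{bl}$ and $\Sigma(\Delta_2)$ have the same rays we obtain $b_2(X) = b_2(X(\Delta_2)) = b_2(Y') = b_2(Y)$, the last equality by the birational invariance of Betti numbers in \cite{batyrevbirational}. The second is the purely toric computation, carried out in the ring of Theorem \ref{toriccohomo}, that the map $\cup[Y] : H^2(X) \to H^4(X)$ (cup with the anticanonical class) is injective. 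Granting these, surjectivity on $H^2$ follows because the easy inclusion gives $\ker(i^*|_{H^2}) \subseteq \ker(\cup[Y]|_{H^2}) = 0$, so $i^*|_{H^2}$ is an injection between spaces of equal dimension and hence an isomorphism. Surjectivity on $H^4$ then follows from the same injectivity: its Poincar\'e-dual transpose on $X$ is the assertion that $\cup[Y] : H^4(X) \to H^6(X)$ is surjective, and feeding this together with the $H^2$-isomorphism and the nondegeneracy of the even pairing on $Y$ into the identity $\int_Y i^*\xi\cup i^*\beta = B(\xi,\beta)$ shows that every class of $H^4(Y)$ is hit. The degrees $H^0$ and $H^6$ are handled trivially, using degree-two generation of $H^*(X)$ and the fact that the triple product of divisor classes on $Y$ is not identically zero.

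The step I expect to demand the most care is the numerical input $b_2(Y) = 3$, because the correspondence between $X$ and the ample model $X(\Delta_2)$ is only a birational map of toric varieties (the fans share their rays but not their cone structure), so one must invoke \cite{batyrevbirational} or Batyrev's Hodge-number formula rather than any morphism. The injectivity of $\cup[Y]$ on $H^2(X)$, though a finite calculation, is best verified explicitly in the presentation of Theorem \ref{toriccohomo}: the non-convexity of $\varphi_{bl}$ already signals that the ambient geometry is genuinely different from the ample case, so this identity deserves a direct check, ideally with the aid of computer algebra.
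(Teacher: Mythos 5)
Your proposal is correct, but it follows a genuinely different route from the paper's. The paper, after the same easy Gysin-relation inclusion $\ker i^* \subseteq \ker(\cup[Y])$, proves both claims simultaneously by a degree-by-degree dimension count: it verifies $\dim\ker\bigl(\cup[Y]\colon H^i \to H^{i+2}\bigr) = \dim H^i(\mathbb{P}^4_{bl},\mathbb{Q}) - \dim H^i(Y,\mathbb{Q})$ for $i=2,4,6$, computing the left-hand side as the full kernel ideal with Macaulay2 (the ideal $\langle 3x_5^3-2x_7^3,\ x_6^3-x_7^3\rangle$ of Table \ref{p4blowup}) and the right-hand side from all even Betti numbers of $Y$, obtained via \cite{batyrevbirational} and Batyrev's Hodge-number formulas. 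You instead isolate surjectivity of $i^*$ as the essential input and recover the kernel equality formally: your identities (the radical of $B(\alpha,\beta)=\int_X\alpha\cup\beta\cup[Y]$ equals $\ker(\cup[Y])$ by duality on $X$, and equals $\ker i^*$ once surjectivity holds) are sound, as is your transposition argument that injectivity of $\cup[Y]$ on $H^2$ gives surjectivity of $\cup[Y]\colon H^4\to H^6$ and hence, with the $H^2$ isomorphism and duality on $Y$, surjectivity of $i^*$ on $H^4$. What your approach buys is economy and transparency: you need only one small toric computation (injectivity of $\cup[Y]$ on $H^2$, which is consistent with the paper's kernel ideal, whose generators sit in degree 6) and one Betti number, $b_2(Y)=3$, sourced from weak Lefschetz on the ample model $X(\Delta_2)$ rather than from the Hodge-number formulas --- though note that, like the paper, you cannot avoid \cite{batyrevbirational}, since $Y$ is only birational to a member of $\mathcal{X}(\Delta_2)$. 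What the paper's approach buys is that its heavier computation is not wasted: the full kernel ideal is exactly what is needed immediately afterwards to compute the trilinear form $t_{bl}$, which is the goal of the section. Two minor points to tighten in your write-up: for $H^6$ surjectivity the cleanest witness is an ample class $\omega$ on $\mathbb{P}^4_{bl}$, since $\int_Y (i^*\omega)^3 > 0$ because $Y$ is a three-dimensional projective subvariety; and your computation implicitly uses the identification $[Y]=x_1+\cdots+x_7$ (the anticanonical class), which the paper justifies by linear equivalence with the section corresponding to $0\in Newt(\varphi_{bl})$ and which deserves a sentence in your argument as well.
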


\begin{proof} The fact that the kernel of $i^*$ must be contained in the kernel of $\cup [Y]$ follows automatically from the fact that $\cup [Y] = i_! \circ i^*$.  Thus, we can prove that the two kernels are equal, and that the map is surjective, by showing that $$\dim \ker\ (\cup [Y]:H^i(\mathbb{P}^4_{bl},\mathbb{Q}) \rightarrow H^{i+2}(\mathbb{P}^4_{bl},\mathbb{Q})) = \dim H^i(\mathbb{P}^4_{bl}, \mathbb{Q}) - \dim H^i(Y, \mathbb{Q})$$ for $i = 2, 4, 6$.  The even Betti numbers of $Y$ can be deduced from the result of Batyrev in \cite{batyrevbirational}.  Since $Y$ is birational to a member of the family $\mathcal{X}(\Delta_2)$, it has the same Betti numbers.  These can be calculated with the formulas in \cite{Ba}, yielding that $\dim H^2 (Y, \mathbb{Q}) = 3$.  (Of course, this can also be established by more elementary topological methods without relying on the result of \cite{batyrevbirational}, although the Lefschetz theorem does not directly apply because $Y$ is not an ample divisor of $\mathbb{P}^4_{bl}$.)

The cohomology class $[Y] \in H^2(\mathbb{P}^4_{bl}) \cong \mathbb{Q}[x_1, \dots, x_7]/(J+SR(\Sigma))$ is equal to $x_1+\cdots+x_7$.  This follows from the fact that the divisor $Y$ is linearly equivalent, and therefore cohomologous, to the divisor defined by the vanishing of the line bundle section $s \in \Gamma(\mathcal{L}_{bl},\mathbb{P}^4_{bl})$ corresponding to the lattice point $0 \in Newt(\varphi_{bl})$.  The section $s$ vanishes precisely once on each toric divisor of $\mathbb{P}^4_{bl}$, and the toric divisors are in bijective correspondence with the variables $x_1, \dots, x_7$.  (The same will be true for all other Calabi-Yau hypersurface families in this paper.)

With the description of $H^*(\mathbb{P}^4_{bl}, \mathbb{Q})$ given by Theorem \ref{toriccohomo}, the kernel of $\cup [Y]$ is easily calculated, especially with the aid of computer algebra software such as Macaulay2 \cite{M2}, verifying that it has the expected dimension.  The calculations are summarized in Table \ref{p4blowup}. 
\end{proof}

Quotienting $H^*(\mathbb{P}^4_{bl}, \mathbb{Q})$ by $\ker \cup [Y]$ gives the even cohomology ring of $Y$, which allows the trilinear form on $H^2(Y, \mathbb{Q})$ to be calculated.  The trilinear form is given as a polynomial in $A,B,C$ which represent coefficients of a basis of $H^2(Y,\mathbb{Q})$.  This also depends on a generator of $H^6(Y, \mathbb{Q}) \cong \mathbb{Q}$, so the polynomial is unique up to a $\mathbb{Q}$-linear change of variables and multiplication by a rational number.

Exactly the same process can be carried out for the family $\mathcal{X}(\Delta_2) \subseteq X(\Delta_2)$ which is birational to $\mathcal{X}_{bl}$.  These calculations are summarized in Table \ref{delta2}.  Because members of $\mathcal{X}(\Delta_2)$ are ample divisors of $X(\Delta_2)$, the fact that $H^*(X(\Delta_2), \mathbb{Q})/(\ker \cup [Y])$ is isomorphic to even cohomology of $Y$ was already established by Mavlyutov in \cite{chiralring} (Theorem 5.1).  In fact, the result in \cite{chiralring} only requires that $Y$ be a semi-ample divisor.  Of course, a direct dimension count to verify the statement in this specific case is also possible.

If members of $\mathcal{X}_{bl}$ and $\mathcal{X}(\Delta_2)$ were homeomorphic, then the polynomials $t_{bl}(A,B,C)$ and $t_2(A,B,C)$ would be related by at most an invertible $\mathbb{Q}$-linear transformation of $A,B,C$ and multiplication by a rational number.  To simplify this problem, we instead look at the Milnor rings associated to $t_{bl}$ and $t_2$.  The Milnor ring associated to a rational polynomial $p(A,B,C)$ is defined as $$M(p) = \mathbb{Q}[A,B,C]/D(p)$$ where $D(p)$ is the Jacobian ideal generated by the partial derivatives $p_A$, $p_B$, $p_C$.  If $p$ is homogeneous, then its Milnor ring is naturally graded.  If $p_1$ and $p_2$ are two polynomials related by a linear change of variables in $A$, $B$, $C$, so $$p_1(A,B,C) = p_2(L_1(A,B,C),L_2(A,B,C),L_3(A,B,C))$$ for some rational linear $L_i$, then the mapping from $\mathbb{Q}[A,B,C]$ to itself defined by
{\begin{gather*} A \rightarrow L_1(A,B,C) \\ 
B \rightarrow L_2(A ,B,C) \\ 
C \rightarrow L_3(A,B,C) \end{gather*}}will map $D(p_2)$ to $D(p_1)$ bijectively, as can be seen from the multivariable chain rule.  Thus the Milnor rings $M(p_1)$ and $M(p_2)$ will be graded isomorphic.

Calculation of the Milnor rings $M(t_{bl})$ and $M(t_2)$ shows that they are graded algebras of finite dimension over $\mathbb{Q}$, with graded dimension $(1, 3, 3, 1)$.  We can define a new trilinear form from the Milnor ring by picking a generator $q$ of the degree 3 (highest) part of the algebra, and setting $(aA+bB+cC)^3 = T(a,b,c)q$ where $a, b, c \in \mathbb{Q}$ and $T$ is a homogeneous polynomial of degree 3.  We compute 
\begin{gather*}T_{bl}(a,b,c) = 3abc+3b^2c = 3ab(b+c)\\
T_2(a,b,c) = a^3/5+12a^2b/5+26ab^2/5+4b^3+9a^2c/5- \\16abc/5-2b^2c+ac^2+4bc^2+c^3.
\end{gather*}
According to Macaulay2, the polynomial $T_2(a,b,c)$ is irreducible over $\mathbb{Q}$.  Since $T_{bl}$ is not, this is enough to show that members of the two Calabi-Yau families cannot be homeomorphic.  

The simple form of the polynomial $T_{bl}$ can be attributed to the fact that if $Y \in \mathcal{X}_{bl}$, $H^2(Y, \mathbb{Q})$ has a basis consisting of cohomology classes $[D_i]$ where $D_1$, $D_2$, $D_3$ are mutually non-intersecting divisors on $Y$.  One of the divisors is the intersection of $Y$ with a hyperplane in $\mathbb{P}^4 \backslash \{P_1, P_2 \} \subseteq \mathbb{P}^4_{bl}$ (where $P_1$ and $P_2$ are the blowup points), while the other two are the exceptional divisors that are the inverse images of $P_1$ and $P_2$ in $Y$.  Because of this, the trilinear form on $H^2(Y, \mathbb{Q})$ can be written as $q_1J^3+q_2K^3+q_3L^3$ where $q_i$ are rational numbers and $J$, $K$ and $L$ are the coefficients of $[D_1]$, $[D_2]$ and $[D_3]$ in $H^2(Y, \mathbb{Q})$. 

\subsection{Other hypersurface families}
With a little more work, we can actually show that the family $\mathcal{X}_{bl}$ is topologically distinct from all other smooth Batyrev hypersurface families.  We do this both to show that we have a potentially new topological type of Calabi-Yau threefold, as well as to show the utility of calculating the trilinear form and its Milnor ring in analyzing the topology of Calabi-Yau threefolds.

Because the family $\mathcal{X}_{bl}$ is birational to the family $\mathcal{X}(\Delta_2)$, the Hodge numbers are the same by the result of \cite{batyrevbirational}.  Using the standard formulas in \cite{Ba}, we find that $h^{1,1}=3$ and $h^{1,2} = 79$ for the family $\mathcal{X}(\Delta_2)$. Utilizing the database of Kreuzer and Skarke \cite{ks}, there are a total of four reflexive polytopes associated to smooth Calabi-Yau families with the same Hodge numbers.  One of these is $\Delta_2$.  Data about the other families is summarized in Tables \ref{delta3}-\ref{delta2,5}. Thankfully, for all three of these additional polytopes, the configuration of lattice points is such that the number of possible MPCP subdivisions is low.  $X(\Delta_4)$ is already smooth, while $X(\Delta_3)$ has a unique MPCP resolution $\widehat{X}(\Delta_3)$ obtained by adding the ray over $u_7$, and $X(\Delta_5)$ has two possible MPCP resolutions $\widehat{X}_1(\Delta_5)$ and $\widehat{X}_2(\Delta_5)$ given by choosing different diagonals of the square $Conv(u_1,u_5,u_6,u_7)$.  

For each of these additional Calabi-Yau families we see that the Milnor ring of the trilinear form is either infinite dimensional or of graded dimension $(1,3,3,1)$ (over $\mathbb{Q}$).  For the finite dimensional rings, calculation of the trilinear form on the Milnor ring gives a polynomial which has at most two irreducible factors over $\mathbb{Q}$.  Because the trilinear form on the Milnor ring for $\mathcal{X}_{bl}$ factors completely into linear factors, it must be topologically distinct.

\begin{table}
\begin{tabular}{ |l |m{9cm} | }
  \hline
  Toric variety and CY family & $$\mathcal{X}_{bl} \subseteq \mathbb{P}^4_{bl}$$ \\
  \hline
 Generators of rays of fan & {\begin{gather*} u_1 = (1,0,0,0), \
 u_2 = (0,1,0,0), \\
 u_3 = (0,0,1,0), \
 u_4 = (0,0,0,1), \\
 u_5 = (-1,-1,-1,-1), \
 u_6 = (0,0,0,-1), \\
 u_7 = (1,1,1,1) \end{gather*}} \\
  \hline
  Cohomology ring of $\mathbb{P}^4_{bl}$ & {\begin{gather*} 
  H^*(\mathbb{P}^4_{bl}, \mathbb{Q}) \cong \mathbb{Q}[x_1,\dots,x_7]/(J+SR(\Sigma)) \\
  J = \langle x_1-x_5+x_7, x_2-x_5+x_7, \\
  x_3-x_5+x_7,x_4-x_5-x_6+x_7 \rangle \\
  SR(\Sigma) = \langle x_1x_2x_3x_4,\ x_1x_2x_3x_5,\ x_4x_6,\ x_5x_7,x_6x_7 \rangle \\
  \mathbb{Q}\hbox{-basis } \{1, x_5, x_6, x_7, x^2_5, \\ x^2_6, x^2_7, x^3_5, x^3_6, x^3_7, x^4_7 \}
  \end{gather*}} \\
  \hline
  Kernel of $\cup [Y]$  & 
  $$\langle 3x_5^3-2x_7^3, x_6^3-x_7^3 \rangle \subseteq H^*(\mathbb{P}^4_{bl}, \mathbb{Q})$$ \\
  \hline
  Trilinear form on $H^2(Y,\mathbb{Q})$ & {\begin{gather*} (Ax_5+Bx_6+Cx_7)^3 = \\ (2A^3/3+3A^2B-3AB^2+B^3+C^3)x_7^3 = \\ t_{bl}(A,B,C)x_7^3 \end{gather*}} \\
  \hline  
\end{tabular}
\caption{Data for Calabi-Yau family $\mathcal{X}_{bl} \subseteq \mathbb{P}^4_{bl}$}
\label{p4blowup}
\end{table}

\begin{table}
\begin{tabular}{ |l |m{9cm} | }
  \hline
  Toric variety and CY family & $$\mathcal{X}(\Delta_2) \subseteq X(\Delta_2)$$ \\
  \hline
 Generators of rays of fan & {\begin{gather*} u_1 = (1,0,0,0), \
 u_2 = (0,1,0,0), \\
 u_3 = (0,0,1,0), \
 u_4 = (0,0,0,1), \\
 u_5 = (-1,-1,-1,-1), \
 u_6 = (0,0,0,-1), \\
 u_7 = (1,1,1,1) \end{gather*}} \\
  \hline
  Cohomology ring of $X(\Delta_2)$ & {\begin{gather*} 
  H^*(X(\Delta_2), \mathbb{Q}) \cong \mathbb{Q}[x_1,\dots,x_7]/(J+SR(\Sigma)) \\
  J = \langle x_1-x_5+x_7, x_2-x_5+x_7, \\
  x_3-x_5+x_7,x_4-x_5-x_6+x_7 \rangle \\
  SR(\Sigma) = \langle x_1x_2x_3,\ x_4x_6,\ x_5x_7 \rangle \\
  \mathbb{Q}\hbox{-basis } \{1, x_5, x_6, x_7, x^2_5, \\ x^2_6, x_6x_7, x^2_7, x^3_6, x^2_6x_7, x^3_7, x_6x^3_7 \}
  \end{gather*}} \\
  \hline
  Kernel of $\cup [Y]$  & 
  $$\langle 3x_5^2+2x_6^2+8x_6x_7+5x_7^2 \rangle \subseteq H^*(X(\Delta_2), \mathbb{Q})$$ \\
  \hline
  Trilinear form on $H^2(Y,\mathbb{Q})$ & {\begin{gather*} (Ax_5+Bx_6+Cx_7)^3 = \\ (A^3+9A^2B/2-9AB^2/2+B^3-\\ 3B^2C/2-3BC^2/2+C^3)x_7^3 = \\ t_2(A,B,C)x_7^3 \end{gather*}} \\
  \hline  
\end{tabular}
\caption{Data for Calabi-Yau family $\mathcal{X}(\Delta_2) \subseteq X(\Delta_2)$}
\label{delta2}
\end{table}




 
 
 
 





\begin{table}
\begin{tabular}{ |l |m{9cm} | }
  \hline
  Toric variety and CY family & $$\mathcal{X}(\Delta_3) \subseteq \widehat{X}(\Delta_3)$$ \\
  \hline
 Generators of rays of fan & {\begin{gather*} u_1 = (-1,0,0,0), \
 u_2 = (-1,1,0,0), \\
 u_3 = (-1,0,1,0), \
 u_4 = (1,0,0,-1), \\
 u_5 = (2,-1,-1,-1), \
 u_6 = (-1,0,0,2), \\
 u_7 = (-1,0,0,1) \end{gather*}} \\
  \hline
  Stanley-Reisner ideal of resolution & 
   $$\langle x_2x_3x_5,\ x_1x_6,\ x_4x_7 \rangle$$ \\
  \hline
  Trilinear form & {\begin{gather*} t_3(A,B,C) = -2A^2B-2AB^2-2B^3/3-\\ 6ABC- 3B^2C+6AC^2+3BC^2 \end{gather*}} \\
  \hline  
  Milnor ring & {\begin{gather*} \hbox{Graded dimension} \ (1,3,3,1) \\
  \hbox{Factored trilinear form}\ T_3(a,b,c) = \\ -7a^3/4+6a^2b-39ab^2/5+18b^3/5+\\
  3a^2c-39abc/5+27b^2c/5-\\5ac^2/2+19bc^2/5+c^3 \end{gather*}} \\
  \hline
\end{tabular}
\caption{Data for Calabi-Yau family $\mathcal{X}(\Delta_3) \subseteq \widehat{X}(\Delta_3)$}
\label{delta3}
\end{table}

\begin{table}
\begin{tabular}{ |l |m{9cm} | }
  \hline
  Toric variety and CY family & $$\mathcal{X}(\Delta_4) \subseteq X(\Delta_4)$$ \\
  \hline
 Generators of rays of fan & {\begin{gather*} u_1 = (-1,0,0,0), \
 u_2 = (-1,0,-1,0), \\
 u_3 = (-1,1,-1,0), \
 u_4 = (1,-1,1,-1), \\
 u_5 = (1,0,1,0), \
 u_6 = (0,0,-1,1), \\
 u_7 = (-1,0,-1,1) \end{gather*}} \\
  \hline
  Stanley-Reisner ideal & 
   $$\langle x_2x_5,\ x_1x_6,\ x_3x_4x_7 \rangle$$ \\
  \hline
  Trilinear form & {\begin{gather*} t_4(A,B,C) = -5A^3/4-3A^2B-3A^2C/4- \\ 6ABC+9AC^2/4-3BC^2/2+C^3 \end{gather*}} \\
  \hline  
  Milnor ring & $$\hbox{Infinite dimensional over } \mathbb{Q}$$ \\
  \hline
\end{tabular}
\caption{Data for Calabi-Yau family $\mathcal{X}(\Delta_4) \subseteq X(\Delta_4)$}
\label{delta4}
\end{table}

\begin{table}
\begin{tabular}{ |l |m{9cm} | }
  \hline
  Toric variety and CY family & $$\mathcal{X}_1(\Delta_5) \subseteq \widehat{X}_1(\Delta_5)$$ \\
  \hline
 Generators of rays of fan & {\begin{gather*} u_1 = (-1,-1,0,0), \
 u_2 = (-1,0,0,-1), \\
 u_3 = (0,-1,0,-1), \
 u_4 = (-1,-1,0,-1), \\
 u_5 = (-1,-1,1,-1), \
 u_6 = (1,1,0,1), \\
 u_7 = (1,1,-1,2) \end{gather*}} \\
  \hline
  Stanley-Reisner ideal of resolution & 
   $$\langle x_1x_2x_3,\ x_4x_6,\ x_5x_7 \rangle$$ \\
  \hline
  Trilinear form & $$t_5(A,B,C)=4A^3/5-3A^2C/5+3B^2C$$ \\
  \hline  
  Milnor ring & $$\hbox{Infinite dimensional over } \mathbb{Q}$$ \\
  \hline
\end{tabular}
\caption{Data for Calabi-Yau family $\mathcal{X}_1(\Delta_5) \subseteq \widehat{X}_1(\Delta_5)$}
\label{delta1,5}
\end{table}

\begin{table}
\begin{tabular}{ |l |m{9cm} | }
  \hline
  Toric variety and CY family & $$\mathcal{X}_2(\Delta_5) \subseteq \widehat{X}_2(\Delta_5)$$ \\
  \hline
 Generators of rays of fan & {\begin{gather*} u_1 = (-1,-1,0,0), \
 u_2 = (-1,0,0,-1), \\
 u_3 = (0,-1,0,-1), \
 u_4 = (-1,-1,0,-1), \\
 u_5 = (-1,-1,1,-1), \
 u_6 = (1,1,0,1), \\
 u_7 = (1,1,-1,2) \end{gather*}} \\
  \hline
  Stanley-Reisner ideal of resolution & 
   $$\langle x_1x_2x_3,\ x_1x_6,\ x_4x_6,\ x_2x_3x_5x_7,\ x_4x_5x_7 \rangle$$ \\
  \hline
  Trilinear form & {\begin{gather*} t_6(A,B,C) = -8A^3/5-B^3+\\ 6A^2C/5-3B^2C-3BC^2+C^3 \end{gather*}} \\
  \hline  
  Milnor ring & {\begin{gather*} \hbox{Graded dimension} \ (1,3,3,1) \\
  \hbox{Factored trilinear form}\ T_6(a,b,c) = \\ (5a^2-20ab-b^2+20ac+2bc-c^2)b \end{gather*}} \\
  \hline
\end{tabular}
\caption{Data for Calabi-Yau family $\mathcal{X}_2(\Delta_5) \subseteq \widehat{X}_2(\Delta_5)$}
\label{delta2,5}
\end{table}

\bibliography{myrefs}
\bibliographystyle{plain}  
\end{document}